\newtheorem{propo}{Proposition}[section]
\newtheorem{corol}[propo]{Corollary}
\newtheorem{theor}[propo]{Theorem}
\newtheorem{lemma}[propo]{Lemma}
\theoremstyle{definition}
\newtheorem{defin}[propo]{Definition}
\newtheorem{examp}[propo]{Example}
\newtheorem{oppro}[propo]{Open Problem}
\theoremstyle{remark}
\newtheorem{remar}[propo]{Remark}
\newcommand{\NN }{\mathbb{N}}
\newcommand{\CC }{\mathbb{C}}
\newcommand{\ZZ }{\mathbb{Z}}
\newcommand{\id }{\mathrm{id}}
\newcommand{\ii }{\mathrm{i}}
\newcommand{\Fc }{\mathcal F}
\newcommand\frieze{frieze pattern\xspace}
\newcommand\friezes{frieze patterns\xspace}
\newcommand\cycle{quiddity cycle\xspace}
\newcommand\cycles{quiddity cycles\xspace}
\newcommand{\pls}{\oplus}
\title[A combinatorial model for tame frieze patterns]
{A combinatorial model for\\ tame frieze patterns}
\author{Michael~Cuntz}
\address{Michael Cuntz, Leibniz Universit\"at Hannover,
Institut f\"ur Algebra, Zah\-lentheorie und Diskrete Mathematik,
Fakult\"at f\"ur Mathematik und Physik,
Wel\-fengarten 1,
D-30167 Hannover, Germany}
\email{cuntz@math.uni-hannover.de}
\begin{document}


\begin{abstract}
Let $R$ be an arbitrary subset of a commutative ring. We introduce a combinatorial model for the set of tame frieze patterns with entries in $R$ based on a notion of irreducibility of frieze patterns. When $R$ is a ring, then a frieze pattern is reducible if and only if it contains an entry (not on the border) which is $1$ or $-1$. To my knowledge, this model generalizes simultaneously all previously presented models for tame frieze patterns bounded by $0$'s and $1$'s.
\end{abstract}

\maketitle

\section{Introduction}

Conway and Coxeter introduced a combinatorial model for the so-called `frieze patterns' \cite{jChC73}: their patterns, consisting entirely of positive numbers within the frieze, are in one-to-one correspondence to triangulations of a convex polygon by non-intersecting diagonals. This gives a connection between specializations of the variables of cluster algebras of type $A$ to positive integers on one side (see for example \cite{mCtH17}), and Catalan combinatorics on the other side.

Since then, many generalizations of these concepts were considered (see \cite{MG15} for a survey). In the present note, for each set $R$ of numbers, we present a combinatorial model which is associated to the set of tame frieze patterns with entries in this set $R$. Hence we generalize the above connection to arbitrary specializations of the variables in the cluster algebras of type $A$.

\begin{figure}[ht]
\begin{center}
\def\svgwidth{0.65\textwidth}
\begingroup%
  \makeatletter%
  \providecommand\color[2][]{%
    \errmessage{(Inkscape) Color is used for the text in Inkscape, but the package 'color.sty' is not loaded}%
    \renewcommand\color[2][]{}%
  }%
  \providecommand\transparent[1]{%
    \errmessage{(Inkscape) Transparency is used (non-zero) for the text in Inkscape, but the package 'transparent.sty' is not loaded}%
    \renewcommand\transparent[1]{}%
  }%
  \providecommand\rotatebox[2]{#2}%
  \ifx\svgwidth\undefined%
    \setlength{\unitlength}{366.82992446bp}%
    \ifx\svgscale\undefined%
      \relax%
    \else%
      \setlength{\unitlength}{\unitlength * \real{\svgscale}}%
    \fi%
  \else%
    \setlength{\unitlength}{\svgwidth}%
  \fi%
  \global\let\svgwidth\undefined%
  \global\let\svgscale\undefined%
  \makeatother%
  \begin{picture}(1,0.58544586)%
    \put(0,0){\includegraphics[width=\unitlength]{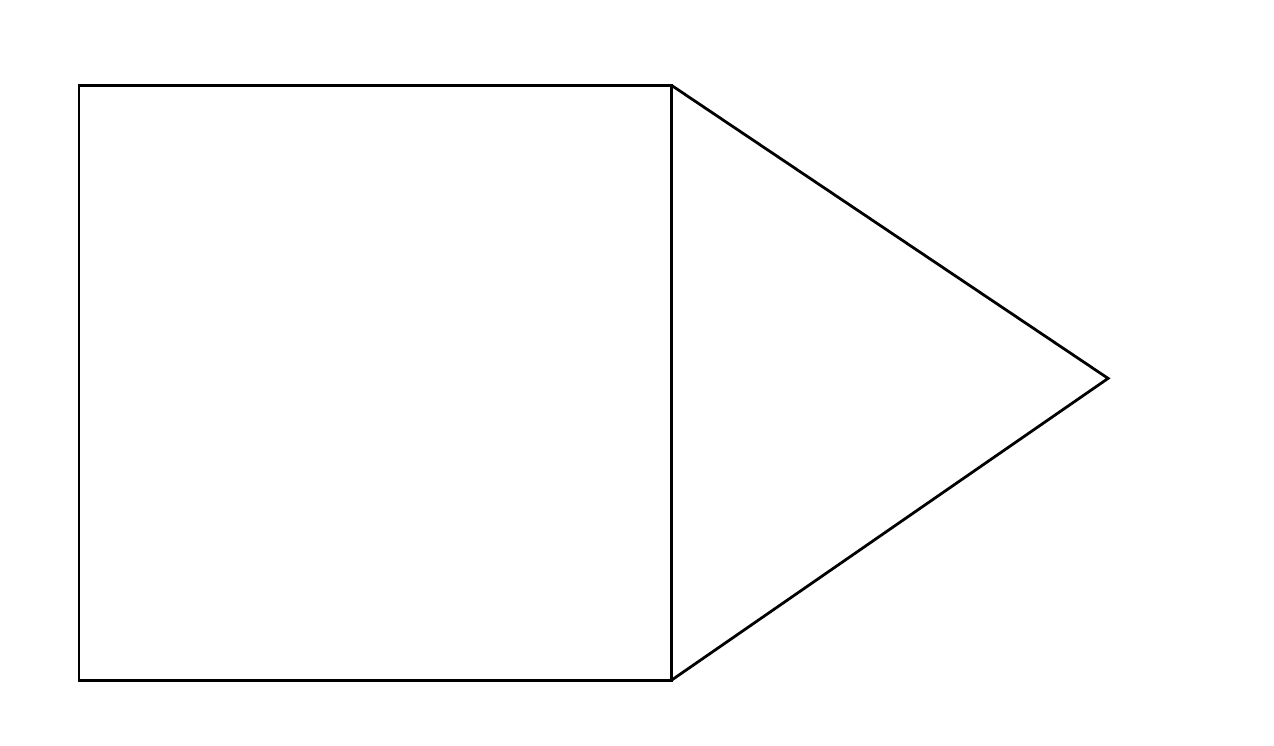}}%
    \put(0.50166532,0.54619062){\color[rgb]{0,0,0}\makebox(0,0)[lb]{\smash{$a-1$}}}%
    \put(0.46483418,0.45411272){\color[rgb]{0,0,0}\makebox(0,0)[lb]{\smash{$a$}}}%
    \put(0.0873149,0.4587166){\color[rgb]{0,0,0}\makebox(0,0)[lb]{\smash{$0$}}}%
    \put(0.0919188,0.09270698){\color[rgb]{0,0,0}\makebox(0,0)[lb]{\smash{$-a$}}}%
    \put(0.54310042,0.12033042){\color[rgb]{0,0,0}\makebox(0,0)[lb]{\smash{$-1$}}}%
    \put(0.54310042,0.41958355){\color[rgb]{0,0,0}\makebox(0,0)[lb]{\smash{$-1$}}}%
    \put(0.76178539,0.26995699){\color[rgb]{0,0,0}\makebox(0,0)[lb]{\smash{$-1$}}}%
    \put(0.89760023,0.27225896){\color[rgb]{0,0,0}\makebox(0,0)[lb]{\smash{$-1$}}}%
    \put(0.51087323,0.00293106){\color[rgb]{0,0,0}\makebox(0,0)[lb]{\smash{$-1$}}}%
    \put(-0.00476297,0.00983685){\color[rgb]{0,0,0}\makebox(0,0)[lb]{\smash{$-a$}}}%
    \put(0.00674676,0.54158666){\color[rgb]{0,0,0}\makebox(0,0)[lb]{\smash{$0$}}}%
    \put(0.46943813,0.08810316){\color[rgb]{0,0,0}\makebox(0,0)[lb]{\smash{$0$}}}%
  \end{picture}%
\endgroup%
\end{center}
\caption{$(a,0,-a,0) \pls (-1,-1,-1) = (a-1,0,-a,-1,-1)$.\label{fig0}}
\end{figure}

To this end, we introduce a notion of irreducibility of frieze patterns, Definition \ref{def:irr}. Every frieze pattern has a (not necessarily unique) decomposition into irreducible frieze patterns. In the combinatorial model, irreducible patterns become polygons that may be glued together to produce arbitrary frieze patterns (see for example Figure \ref{fig0}).

The problem of understanding this type of combinatorics for a given set $R$ thus reduces to the problem of classifying the irreducible patterns.
It turns out that a frieze pattern is reducible over a ring $R$ if and only if it contains an entry (not on the border) which is $1$ or $-1$ (see Lemma \ref{lem:irr1-1}).

\medskip
\noindent{\bf Acknowledgement:}
{I am very grateful to C.~Bessenrodt, T.~Holm, P.~J\o r\-gensen, S.~Morier-Genoud, and V.~Ovsienko for many valuable comments.}

\section{Quiddity cycles}

\begin{defin} \label{def:etamatrix}
For $c$ in a commutative ring, let
$$\eta(c) := \begin{pmatrix} c & -1 \\ 1 & 0 \end{pmatrix}.$$
\end{defin}

\begin{defin} \label{def:quiddity}
Let $R$ be a subset of a commutative ring and $\lambda\in\{\pm 1\}$.
A \emph{$\lambda$-\cycle}\footnote{Notice that the case $R=\NN_{>0}$ was also recently considered in \cite{vO17}.} over $R$ is a sequence $(c_1,\ldots,c_m)\in R^m$ satisfying
\begin{equation}\label{etaid}
\prod_{k=1}^{m} \eta(c_k) = \begin{pmatrix} \lambda & 0 \\ 0 & \lambda \end{pmatrix} = \lambda \id.
\end{equation}
A $(-1)$-\cycle is called a \emph{\cycle} for short.
\end{defin}

\begin{remar}
We agree that $m>0$ in Def.\ \ref{def:quiddity}. In fact, $m>1$ by Def.\ \ref{def:etamatrix}.
\end{remar}

\begin{examp}
Consider the commutative ring $\CC$ and $R=\CC$.
\begin{enumerate}
\item $(0,0)$ is the only $\lambda$-\cycle of length $2$.
\item $(1,1,1)$ and $(-1,-1,-1)$ are the only $\lambda$-\cycles of length $3$.
\item $(t,2/t,t,2/t)$, $t$ a unit and $(a,0,-a,0)$, $a$ arbitrary, are the only $\lambda$-\cycles of length $4$.
\end{enumerate}
\end{examp}

\begin{defin}
Let $D_n$ be the dihedral group with $2n$ elements acting on $\{1,\ldots,n\}$. If $\underline{c}=(c_1,\ldots,c_n)$ is a $\lambda$-\cycle, then we write
\[ \underline{c}^\sigma := (c_1,\ldots,c_n)^\sigma := (c_{\sigma(1)},\ldots,c_{\sigma(n)}) \]
for $\sigma\in D_n$.
\end{defin}

\begin{propo}
Let $\underline{c}=(c_1,\ldots,c_m)$ be a $\lambda$-\cycle.
Then for any $\sigma\in D_n$, the cycle $\underline{c}^\sigma$ is a $\lambda$-\cycle as well.
\end{propo}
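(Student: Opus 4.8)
The plan is to exploit the fact that $D_m$ is generated by the rotation $\rho\colon k\mapsto k+1\pmod m$ together with a single reflection, say $\tau\colon k\mapsto m+1-k$, and to verify the claim just for these two generators. (Here $n=m$: the group acts on the $m$ indices of $\underline{c}$.) Since $(\underline{c}^{\,\sigma})^{\tau'}=\underline{c}^{\,\sigma\tau'}$, the set of $\sigma\in D_m$ for which $\underline{c}^{\,\sigma}$ is a $\lambda$-\cycle for every $\lambda$-\cycle $\underline{c}$ is closed under composition and contains the identity, hence is a subgroup of $D_m$; so once it is known to contain $\rho$ and $\tau$, an arbitrary $\sigma$ is obtained by applying these two operations in succession.

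For the rotation, I would conjugate the defining identity \eqref{etaid} by $\eta(c_1)$, which is invertible since $\det\eta(c_1)=1$. Because $\lambda\id$ is central, $\eta(c_1)^{-1}(\lambda\id)\eta(c_1)=\lambda\id$, while the left-hand side equals $\eta(c_1)^{-1}\bigl(\prod_{k=1}^{m}\eta(c_k)\bigr)\eta(c_1)=\bigl(\prod_{k=2}^{m}\eta(c_k)\bigr)\eta(c_1)=\prod_{k=1}^{m}\eta(c_{\rho(k)})$. Hence $\underline{c}^{\,\rho}=(c_2,\ldots,c_m,c_1)$ is again a $\lambda$-\cycle.

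For the reflection, the key observation to isolate is that transposition of $\eta(c)$ is induced by a fixed conjugation: with $P:=\diag(-1,1)$ one has $P^2=\id$, and a direct $2\times 2$ computation gives $P\,\eta(c)^{\mathsf T}P=\eta(c)$, i.e.\ $\eta(c)^{\mathsf T}=P\,\eta(c)\,P$. Transposing \eqref{etaid} reverses the order of the factors, so $\prod_{k=1}^{m}\eta(c_{m+1-k})^{\mathsf T}=(\lambda\id)^{\mathsf T}=\lambda\id$; substituting $\eta(c_{m+1-k})^{\mathsf T}=P\,\eta(c_{m+1-k})\,P$ and cancelling the interior factors $P^2=\id$ gives $P\bigl(\prod_{k=1}^{m}\eta(c_{\tau(k)})\bigr)P=\lambda\id$, and conjugating back (using once more that $\lambda\id$ is central) yields $\prod_{k=1}^{m}\eta(c_{\tau(k)})=\lambda\id$. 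Thus $\underline{c}^{\,\tau}=(c_m,c_{m-1},\ldots,c_1)$ is a $\lambda$-\cycle.

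I do not expect a genuine obstacle here; the only points requiring a bit of care are the index bookkeeping — in particular checking that the chosen $\rho$ and $\tau$ really generate $D_m$, which can fail only in the degenerate range $m=2$, where however the unique length-$2$ \cycle $(0,0)$ is fixed by all of $D_2$ anyway — and writing the conjugation identity $P\,\eta(c)^{\mathsf T}P=\eta(c)$ cleanly. Beyond that, both generator cases are short matrix computations together with the centrality of $\lambda\id$.
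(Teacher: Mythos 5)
Your proof is correct, and it differs from the paper's in one substantive respect. For the rotation you argue exactly as the paper does: since $\lambda\id$ is central, conjugating the identity \eqref{etaid} by the invertible matrix $\eta(c_1)$ cyclically permutes the factors, so $(c_2,\ldots,c_m,c_1)$ is again a $\lambda$-quiddity cycle. For the reversal, however, the paper does not give an argument at all but cites an external reference (\cite[Prop.\ 5.3 (3)]{p-CH09d}), whereas you supply a short self-contained proof: the identity $\eta(c)^{\mathsf T}=P\,\eta(c)\,P$ with $P=\diag(-1,1)$, $P^2=\id$, turns the transpose of \eqref{etaid} (which reverses the order of the factors) into $P\bigl(\prod_k\eta(c_{\tau(k)})\bigr)P=\lambda\id$, and conjugating back gives the claim; I checked the $2\times 2$ computation and it is right, and it works over any commutative ring since only the entries $0,\pm 1$ and commutativity (for reversing the order under transposition) are used. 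Your reduction to the two generators via the right-action identity $(\underline{c}^{\sigma})^{\tau'}=\underline{c}^{\sigma\tau'}$ and the finiteness of $D_m$ is sound, and the degenerate case $m=2$ is harmless as you note (the action of $D_m$ on the indices is all that matters, so only its image in the symmetric group needs to be generated). Net effect: your write-up is more elementary and self-contained than the paper's, at the cost of a little extra bookkeeping that the paper avoids by citation.
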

\begin{proof}
Since the matrix $\lambda \id$ commutes with every matrix, rotating this cycle is again a $\lambda$-\cycle.
Reversing a $\lambda$-\cycle is also a $\lambda$-\cycle, see for example \cite[Prop.\ 5.3 (3)]{p-CH09d}.
\end{proof}

When thinking about a $\lambda$-\cycle $\underline{c}$, in general we do not care which element in $D_n\cdot \underline{c}$ we consider.
In the following lemma however, we have to be careful.
We introduce a \emph{sum} of $\lambda$-\cycles which is not invariant under the action of the dihedral group.
Note that a similar ``gluing'' of \friezes was already described in other papers (for instance \cite[Lemma 3.2]{tHpJ17} for \cycles in which all entries are equal or \cite{MG12} for 2-friezes).

\begin{lemma}\label{lem:comp}
Let $(a_1,\ldots,a_k)$ be a $\lambda'$-\cycle and $(b_1,\ldots,b_\ell)$ be a $\lambda''$-\cycle.
Then
\[ (a_1+b_\ell,a_2,\ldots,a_{k-1},a_k+b_1,b_2,\ldots,b_{\ell-1}) \]
is a $(-\lambda'\lambda'')$-\cycle of length $k+\ell-2$ which we call the \emph{sum}:
$$(a_1,\ldots,a_k) \pls (b_1,\ldots,b_\ell):= (a_1+b_\ell,a_2,\ldots,a_{k-1},a_k+b_1,b_2,\ldots,b_{\ell-1}).$$
\end{lemma}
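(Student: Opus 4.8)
The plan is to reduce the whole statement to a single matrix identity that ``splits'' a sum appearing inside $\eta$. First I would record two elementary $2\times 2$ computations: that $\eta(0)^2 = -\id$, and that for all $a,b$ in the commutative ring
\[ \eta(a+b) = -\,\eta(a)\,\eta(0)\,\eta(b), \]
which, since $\eta(0)^{-1} = -\eta(0)$, can also be written $\eta(a+b) = \eta(a)\,\eta(0)^{-1}\,\eta(b)$. Both are verified by multiplying out $2\times 2$ matrices. As $a+b = b+a$, the symmetric form $\eta(a+b) = -\,\eta(b)\,\eta(0)\,\eta(a)$ holds as well.

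Next I would expand the product of $\eta$'s along the claimed sum,
\[ P := \eta(a_1+b_\ell)\,\eta(a_2)\cdots\eta(a_{k-1})\,\eta(a_k+b_1)\,\eta(b_2)\cdots\eta(b_{\ell-1}), \]
and use the splitting identity on the two ``glued'' entries, namely $\eta(a_1+b_\ell) = -\,\eta(b_\ell)\,\eta(0)\,\eta(a_1)$ and $\eta(a_k+b_1) = -\,\eta(a_k)\,\eta(0)\,\eta(b_1)$. The two minus signs cancel, and after regrouping the interior runs $\eta(a_1)\eta(a_2)\cdots\eta(a_k)$ and $\eta(b_1)\eta(b_2)\cdots\eta(b_{\ell-1})$ reassemble inside $P$. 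Applying the $\lambda'$-\cycle relation $\prod_{i=1}^{k}\eta(a_i) = \lambda'\id$ (a scalar matrix, hence central) collapses $P$ to
\[ P = \lambda'\,\eta(b_\ell)\,\eta(0)^2\,\eta(b_1)\cdots\eta(b_{\ell-1}) = -\,\lambda'\,\eta(b_\ell)\,\eta(b_1)\cdots\eta(b_{\ell-1}). \]

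Finally I would observe that $\eta(b_\ell)\,\eta(b_1)\cdots\eta(b_{\ell-1})$ is a cyclic rotation of $\eta(b_1)\cdots\eta(b_\ell) = \lambda''\id$, and such a rotation equals the same scalar, since $\eta(b_\ell)\,\eta(b_1)\cdots\eta(b_{\ell-1}) = \eta(b_\ell)\bigl(\eta(b_1)\cdots\eta(b_\ell)\bigr)\eta(b_\ell)^{-1} = \eta(b_\ell)(\lambda''\id)\eta(b_\ell)^{-1} = \lambda''\id$. Hence $P = -\lambda'\lambda''\,\id$, so the sum is a $(-\lambda'\lambda'')$-\cycle, and counting its entries gives $1+(k-2)+1+(\ell-2) = k+\ell-2$. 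The computation is uniform in $k$ and $\ell$, so it already covers the degenerate cases $k=2$ or $\ell=2$, where the lists $a_2,\ldots,a_{k-1}$ or $b_2,\ldots,b_{\ell-1}$ are empty. I do not expect a genuine conceptual obstacle here; the only point that needs care is the sign bookkeeping — matching the two factors of $-1$ from the splitting identity against the factor from $\eta(0)^2 = -\id$ — so that is the one spot I would write out in full.
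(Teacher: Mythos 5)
Your proposal is correct and follows essentially the same route as the paper: the same identities $\eta(a+b)=-\eta(a)\eta(0)\eta(b)$ and $\eta(0)^2=-\id$, the same splitting at the two glued entries with the signs cancelling, collapsing via the $\lambda'$-relation, and then absorbing the cyclically rotated $\lambda''$-relation (which you spell out via conjugation, a detail the paper leaves implicit).
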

\begin{proof} We use the identities $\eta(a+b)=-\eta(a)\eta(0)\eta(b)$ and $\eta(0)^2 = -\id$ (which are easy to check, see also \cite[Lemma 4.1]{mCtH17}):
\begin{eqnarray*}
&& \hspace{-22pt} \eta(a_1+b_\ell)\eta(a_2)\cdots\eta(a_{k-1})\eta(a_k+b_1)\eta(b_2)\cdots\eta(b_{\ell-1}) \\
&=& \eta(b_\ell)\eta(0)\eta(a_1)\eta(a_2)\cdots\eta(a_{k-1})\eta(a_k)\eta(0)\eta(b_1)\eta(b_2)\cdots\eta(b_{\ell-1}) \\
&=& \lambda' \eta(b_\ell)\eta(0)\eta(0)\eta(b_1)\eta(b_2)\cdots\eta(b_{\ell-1}) \\
&=& -\lambda' \eta(b_\ell)\eta(b_1)\eta(b_2)\cdots\eta(b_{\ell-1}) = -\lambda'\lambda'' \id. \qedhere
\end{eqnarray*}
\end{proof}

\begin{examp}
\begin{enumerate}
\item If $(a_1,\ldots,a_m)$ is a \cycle, then $$(a_1,\ldots,a_m)\pls (0,0) = (a_1,\ldots,a_m).$$
\item For $a\in\CC$, $(a,0,-a,0)$ and $(-1,-1,-1)$ are $1$-\cycles, their sum is\\ $(a-1,0,-a,-1,-1)$ and is a \cycle (see also Fig.\ \ref{fig0}).
\end{enumerate}
\end{examp}

The following are the central notions of \emph{reducibility} and \emph{irreducibility} of \cycles mentioned in the introduction.

\begin{defin}\label{def:irr}
Let $R$ be a subset of a commutative ring.
A $\lambda$-\cycle $(c_1,\ldots,c_m)\in R^m$, $m>2$ is called \emph{reducible over $R$} if there exist
a $\lambda'$-\cycle $(a_1,\ldots,a_k)\in R^k$, a $\lambda''$-\cycle $(b_1,\ldots,b_\ell)\in R^\ell$, and $\sigma\in D_m$ such that
$\lambda = -\lambda'\lambda''$, $k,\ell>2$ and
\begin{eqnarray*}
(c_1,\ldots,c_m)^\sigma &=& (a_1+b_\ell,a_2,\ldots,a_{k-1},a_k+b_1,b_2,\ldots,b_{\ell-1})\\
&=& (a_1,\ldots,a_k)\pls (b_1,\ldots,b_\ell).
\end{eqnarray*}
A $\lambda$-\cycle of length $m>2$ is called \emph{irreducible over $R$} if it is not reducible.
\end{defin}

\begin{remar}
There is no need to consider the cycle of length $m<3$ (which is $(0,0)$) in Definition \ref{def:irr}.
\end{remar}

\begin{defin}
Consider a $\lambda$-\cycle $\underline{c}=(c_1,\ldots,c_m)$ and define $c_k$ for all $k\in\ZZ$ by repeating $\underline{c}$ periodically.
For $i,j\in\ZZ$ let
\[ x_{i,j}:= \left(\prod_{k=i}^{j-2} \eta(c_k)\right)_{1,1} \quad \text{if } i\le j-2, \]
$x_{i,i+1}:=1$, and $x_{i,i}:=0$.
Notice that $x_{i,i+2}=c_i$.\\
Then we call the array $\Fc=(x_{i,j})_{i\le j\le i+m}$ the \emph{\frieze} of $\underline{c}$. The \emph{entries} of the \frieze of $\underline{c}$ are the numbers $x_{i,j}$ with $i+2\le j \le i+m-2$.\\
We say that the \frieze of $\underline{c}$ is \emph{reducible} resp.\ \emph{irreducible} if $\underline{c}$ is \emph{reducible} resp.\ \emph{irreducible}.
\end{defin}

\begin{remar}
(a) If $\underline{c}$ is a \cycle, then we obtain
what we usually call the frieze pattern. In fact, in this way we exactly obtain all \emph{tame} frieze patterns, i.e.\ those for which every adjacent $3\times 3$ determinant is zero (see for example \cite[Prop.\ 2.4]{mCtH17}).\\
Starting with a $1$-\cycle, one obtains a \frieze with $1$'s on one border and $-1$'s on the other border, i.e.\ $x_{i,i+m-1}=-1$ for all $i$.

(b) The entries $x_{i,j}$ of a \frieze are specialized cluster variables of a cluster algebra of Dynkin type $A$ (see for example \cite[Section 5]{mCtH17}).

(c) Notice that if $\underline{c}$ is a $\lambda$-\cycle over $R$, then its \frieze may have entries which are not in $R$.
It is an interesting question to determine the set of entries of \friezes of $\lambda$-\cycles for a fixed set $R$.
For example, if $R$ is a ring then all entries in the \friezes are in $R$.
\end{remar}

The following lemma explains the appearance of $1$'s and $-1$'s in friezes. Some similar statement is contained implicitly for the case $R=\NN_{>0}$ in \cite[Cor.\ 1.11]{sMG14} for Coxeter friezes.

\begin{lemma}\label{lem:irr1-1}
Let $R$ be a commutative ring.
A $\lambda$-\cycle is reducible over $R$ if and only if the corresponding tame frieze pattern contains an entry $1$ or $-1$.
\end{lemma}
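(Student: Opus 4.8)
The plan is to prove both directions by translating between the matrix factorizations that define reducibility and the occurrence of an entry $\pm 1$ in the frieze. Recall that the entry $x_{i,j}$ is the $(1,1)$-entry of the product $\eta(c_i)\cdots\eta(c_{j-2})$, and that by definition $x_{i,i+1}=1$, $x_{i,i}=0$.

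\emph{Reducible $\Rightarrow$ entry $\pm 1$.} Suppose $\underline{c}^\sigma = (a_1,\ldots,a_k)\pls(b_1,\ldots,b_\ell)$ with $k,\ell>2$. After applying the dihedral symmetry I may assume $\sigma=\mathrm{id}$, so $\underline{c}=(a_1+b_\ell,a_2,\ldots,a_{k-1},a_k+b_1,b_2,\ldots,b_{\ell-1})$, a cycle of length $m=k+\ell-2$. I would look at the entry of the frieze ``at the seam'', namely the one indexed by the block $(a_2,\ldots,a_{k-1})$: concretely $x_{2,k}=\left(\prod_{t=2}^{k-2}\eta(a_t)\right)_{1,1}$ using the periodic extension. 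Since $(a_1,\ldots,a_k)$ is a $\lambda'$-\cycle, $\eta(a_1)\cdots\eta(a_k)=\lambda'\id$, so $\eta(a_2)\cdots\eta(a_{k-1}) = \lambda'\,\eta(a_1)^{-1}\eta(a_k)^{-1}$; computing $\eta(c)^{-1}=\begin{pmatrix}0&1\\-1&c\end{pmatrix}$ shows this matrix has $(1,1)$-entry equal to $\lambda'(0\cdot 0 + 1\cdot(-1)) = -\lambda'$. Hence $x_{2,k}=-\lambda'\in\{\pm 1\}$, and this is a genuine entry of the frieze because $k>2$ (so $2+2\le k$) and $k\le m-2$ (so $k\le 2+m-2$); the latter holds since $\ell>2$. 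Thus the frieze contains an entry $\pm 1$.

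\emph{Entry $\pm 1$ $\Rightarrow$ reducible.} Conversely, suppose some frieze entry equals $\pm 1$; by rotation assume it is $x_{2,k}=\varepsilon\in\{\pm1\}$ for some $k$ with $4\le k\le m-2$. Set $M=\eta(c_2)\cdots\eta(c_{k-2})$, so $M_{1,1}=\varepsilon$. The idea is to split the full relation $\prod_{j=1}^m\eta(c_j)=\lambda\id$ at positions $1$ and $k$ and reverse the computation above: I want to exhibit $(a_1,\ldots,a_k)$ and $(b_1,\ldots,b_\ell)$ over $R$ with $\ell=m-k+2$, $a_1+b_\ell=c_1$, $a_k+b_1=c_k$, interior entries $a_t=c_t$ for $2\le t\le k-1$ and $b_s=c_{k+s-1}$ for $2\le s\le \ell-1$, such that each is a $\lambda'$- resp.\ $\lambda''$-\cycle. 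The condition ``$(a_1,\ldots,a_k)$ is a $\lambda'$-cycle'' forces, via the same manipulation, $M_{1,1}=-\lambda'$, i.e.\ $\lambda'=-\varepsilon$; then $\eta(a_1)\eta(a_k)$ is determined as $-\varepsilon M^{-1}$, and one must check that this matrix has the form $\eta(x)\eta(y)$ for suitable $x,y$ with $x-c_2$-slot..., more precisely that there exist $a_1,a_k$ (not necessarily in $R$ a priori, but here $R$ is a \emph{ring}, which is where that hypothesis enters) realizing it; writing $\eta(a_1)\eta(a_k)=\begin{pmatrix}a_1a_k-1 & -a_1\\ a_k & -1\end{pmatrix}$ one reads off $a_1,a_k$ from the first column and checks consistency of the rest using $\det=1$ and the known value of the $(1,1)$-entry. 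One then chooses $b_1:=c_k-a_k$ and $b_\ell:=c_1-a_1$, defines the remaining $b_s$ as the corresponding $c$'s, and verifies $\eta(b_1)\cdots\eta(b_\ell)=\lambda''\id$ with $\lambda''=-\lambda/\lambda'$ by using the global relation $\prod\eta(c_j)=\lambda\id$ together with $\eta(a+b)=-\eta(a)\eta(0)\eta(b)$ and $\eta(0)^2=-\id$ run backwards. Finally $k>2$ gives $k\ge 4$ hence both cycles have length $>2$, so the decomposition witnesses reducibility.

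\emph{Main obstacle.} The delicate point is the converse: showing that the local data ``$M_{1,1}=\varepsilon$'' is exactly enough to solve for $a_1,a_k\in R$ and to guarantee that the leftover block $(b_1,\ldots,b_\ell)$ really closes up to $\lambda''\id$. The first column of $\eta(a_1)\eta(a_k)$ determines $a_k$ and $a_1a_k$; the genuine content is that the value $M_{1,1}=\varepsilon=\pm 1$ makes the remaining three entries automatically consistent, which is where $\det\eta(c)=1$ and $\varepsilon^2=1$ conspire, and where being in a \emph{ring} (so that $c_1-a_1$, $c_k-a_k$ make sense and land in $R$) is used — this is precisely the hypothesis flagged in the statement, and I expect the bulk of the write-up to be this bookkeeping plus the careful index check $4\le k\le m-2$.
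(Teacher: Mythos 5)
Your strategy is the same as the paper's -- translate an interior entry $\pm 1$ into a factorization of the corresponding matrix product and split the cycle there -- but as written the proposal has index errors that matter, and the substantive direction is only planned, not carried out. On the indexing: in the direction ``reducible $\Rightarrow$ entry'', the matrix you actually analyse is $\eta(a_2)\cdots\eta(a_{k-1})$, whose $(1,1)$-entry is $x_{2,k+1}$, not $x_{2,k}=\bigl(\eta(c_2)\cdots\eta(c_{k-2})\bigr)_{1,1}$; with the correct index the entry condition is $4\le k+1\le m$, which holds exactly because $k,\ell\ge 3$, whereas your check ``$2+2\le k$ and $k\le m-2$'' fails precisely when $k=3$ or $\ell=3$ -- e.g.\ for a summand $(1,1,1)$, the most important case. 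The same off-by-one infects the converse: the split you propose (interior $c_2,\ldots,c_{k-1}$, modifying $c_1$ and $c_k$) is governed by $\eta(c_2)\cdots\eta(c_{k-1})$, so the hypothesis it needs is $x_{2,k+1}=\pm1$, not $M_{1,1}=\pm1$ for your $M=\eta(c_2)\cdots\eta(c_{k-2})$ (also the closing condition determines $\eta(a_k)\eta(a_1)$, not $\eta(a_1)\eta(a_k)$); and your restriction $4\le k\le m-2$ silently discards entries of width $m-3$ and $m-2$, which must be handled too. Once the indices are aligned -- first summand $(a_1,c_2,\ldots,c_{k-2},a_{k-1})$ of length $k-1$, second summand $(c_{k-1}-a_{k-1},c_k,\ldots,c_m,c_1-a_1)$ of length $m-k+3$ -- every genuine entry $4\le k\le m$ yields two summands of length $\ge 3$, with no excluded cases.

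More decisively, the content of the hard direction is exactly what you defer with ``one must check'' and ``I expect the bulk of the write-up to be this bookkeeping''. The paper does it in two lines: with $M_{1,1}=\varepsilon$ and $\det M=1$, set $a:=\varepsilon M_{2,1}$, $b:=-\varepsilon M_{1,2}$; then $M=-\varepsilon\,\eta(a)^{-1}\eta(b)^{-1}$, hence $\eta(a)M\eta(b)=-\varepsilon\id$, so $(a,c_i,\ldots,c_{j-2},b)$ is a $(-\varepsilon)$-cycle, and the complementary $(c_{j-1}-b,c_j,\ldots,c_m,c_1,\ldots,c_{i-1}-a)$ is a $(\lambda\varepsilon)$-cycle of length $m-j+i+1\ge 3$; the ring hypothesis enters exactly where you say (to keep $a$, $b$, $c_{j-1}-b$, $c_{i-1}-a$ in $R$). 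Since in your proposal the existence of $a_1,a_k$ and the closing-up of the leftover block are asserted rather than verified, the ``entry $\Rightarrow$ reducible'' direction is a correct plan but not yet a proof; filling in that short computation (and fixing the off-by-one) reproduces the paper's argument.
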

\begin{proof}
Reducibility requires that the length $m$ of the cycle is at least $4$; since there are no entries in a frieze pattern with $\lambda$-\cycle of length less than $4$, we may assume $m\ge 4$.\\
Assume first the existence of an entry $\varepsilon=\pm 1$, i.e.\ without loss of generality (rotating the cycle if necessary) there are $i,j\in\{1,\ldots,m\}$ with $i<j-1$, $j-i<m-1$ and
$M_{1,1}=\varepsilon$ for $M=\prod_{k=i}^{j-2} \eta(c_k)$. Since $\det(M)=1$, with $a:=\varepsilon M_{2,1}$, $b:=-\varepsilon M_{1,2}$ we have
\[ M = \begin{pmatrix} \varepsilon & -\varepsilon b \\ \varepsilon a & -\varepsilon ab+\varepsilon \end{pmatrix}
= -\varepsilon \begin{pmatrix} -1 & b \\ -a & ab-1 \end{pmatrix}
 = -\varepsilon \eta(a)^{-1}\eta(b)^{-1}. \]
We obtain
\[ \eta(a) \left(\prod_{k=i}^{j-2} \eta(c_k)\right) \eta(b) = -\varepsilon \id, \]
so $(a,c_i,\ldots,c_{j-2},b)$ is a $(-\varepsilon)$-\cycle. It follows that
\[ (c_{j-1}-b,c_j,\ldots,c_m,c_1,\ldots,c_{i-2},c_{i-1}-a) \]
is a $(\lambda\varepsilon)$-\cycle of length $m-j+i+1\ge 3$ since $j-i<m-1$; thus the cycle is reducible.\\
For the converse, assume that we have a decomposition into a sum as above. But then $\left(\prod_{k=i}^{j-2} \eta(c_k)\right)_{1,1}\in\{\pm 1\}$ for some $i,j$ with $i<j-2$ which gives an entry $\pm 1$ in the pattern.
\end{proof}

\section{Examples of subsets}

Some classifications of irreducible $\lambda$-\cycles are already known.
For example, every \cycle over $\NN_{>0}$ contains a $1$. Thus any \cycle over $\NN_{>0}$ of length greater than $3$ has a summand $(1,1,1)$ (cf.\ {\cite{jChC73}}), although the other summand only has positive entries if the original frieze pattern has no entry zero. In general:
\begin{theor}
The only irreducible $\lambda$-\cycles over $\ZZ_{\ge 0}$ are $(0,0,0,0)$ and $(1,1,1)$.
\end{theor}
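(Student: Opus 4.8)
The plan is to prove by induction on the length $m$ that \emph{every} $\lambda$-\cycle over $\ZZ_{\ge 0}$ of length $m\ge 4$ is reducible, with the single exception $(0,0,0,0)$. Together with the two trivial facts that the only $\lambda$-\cycle of length $2$ is $(0,0)$ (which is excluded from the notion of irreducibility) and that the only one of length $3$ is $(1,1,1)$ (the cycle condition forces $K(c_1,c_2)=c_1c_2-1=0$, hence $c_1=c_2=1$ over $\ZZ_{\ge 0}$, and then $c_3=1$), which is irreducible because a reducible cycle has length $k+\ell-2\ge 4$, this yields the theorem.

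The engine is the description of $\prod_k\eta(c_k)$ by continuants: writing $K$ for the continuant, $\prod_{k=1}^m\eta(c_k)=\left(\begin{smallmatrix}K(c_1,\dots,c_m)&-K(c_1,\dots,c_{m-1})\\ K(c_2,\dots,c_m)&-K(c_2,\dots,c_{m-1})\end{smallmatrix}\right)$, so the cycle condition forces $K(c_1,\dots,c_{m-1})=0$. From the recursion $K(e_1,\dots,e_n)=e_1K(e_2,\dots,e_n)-K(e_3,\dots,e_n)$ one gets by a one-line induction that any \emph{nonempty} sequence of integers $\ge 2$ satisfies $K(e_1,\dots,e_n)>K(e_2,\dots,e_n)>\dots>K(e_n)>0$. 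Hence $c_1,\dots,c_{m-1}$ cannot all be $\ge 2$, so some entry $c_i$ of the cycle lies in $\{0,1\}$; more generally any block of consecutive entries all $\ge 2$ has strictly positive continuant, which drives the rest.

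Now fix $m\ge 4$. If some $c_i=1$ with $c_{i-1},c_{i+1}\ge 1$, the identity $\eta(a)\eta(1)\eta(b)=\eta(a-1)\eta(b-1)$ (a direct $2\times 2$ computation) shows, after a dihedral relabelling, that $(c_1,\dots,c_m)=(1,1,1)\pls(c_{i+1}-1,c_{i+2},\dots,c_{i-2},c_{i-1}-1)$, a sum of two $\ZZ_{\ge 0}$-\cycles of lengths $3$ and $m-1$, so the cycle is reducible. If some $c_i=c_{i+1}=0$ and $m\ge 5$, then $\eta(0)^2=-\id$ gives $(c_1,\dots,c_m)=(0,0,0,0)\pls(c_{i+2},\dots,c_{i-1})$, a sum of $\ZZ_{\ge 0}$-\cycles of lengths $4$ and $m-2\ge 3$, so again reducible; and if $m=4$ with $c_i=c_{i+1}=0$ the continuant relations force the cycle to be $(0,0,0,0)$ itself.

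What remains is a cycle that has a zero but in which no two zeros are adjacent and no $1$ has two nonzero neighbours; this is the crux. In such a cycle every entry lying in the interior of a maximal block of consecutive nonzero entries is $\ge 2$. If there is only one zero, the cycle condition forces the continuant of the interior of that (unique) block to vanish, contradicting the positivity bound. If there are $p\ge 2$ zeros and every block between consecutive zeros has length $\ge 2$, then merging all $p$ zeros simultaneously (each merge using $\eta(a)\eta(0)\eta(b)=-\eta(a+b)$) produces a $((-1)^p\lambda)$-\cycle over $\ZZ_{\ge 0}$ every entry of which is $\ge 2$ — each new entry is a sum of two positive entries and each surviving entry is block-interior — which is impossible for the same reason (it would be either the length-$2$ cycle $(0,0)$ or force a vanishing continuant of a block of entries $\ge 2$). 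The only surviving configuration is a zero adjacent to a block of length $1$, i.e.\ a pattern $\dots,0,e,0,\dots$; here I would merge one of these two zeros, apply the inductive hypothesis to the resulting $((-1)\lambda)$-\cycle of length $m-2$ (which is neither $(0,0,0,0)$ nor $(1,1,1)$ since its merged entry is $\ge 2$), and pull the reduction back by reinserting the zero — which flips the corresponding matrix product by $-1$ — the only delicate point being that the merged entry may land at the gluing position of the two summands, a case disposed of by merging the other zero instead or by a short direct computation. This last step, handling the length-$1$ block together with the pull-back, is the main obstacle; everything else is routine.
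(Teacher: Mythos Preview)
Your overall structure (split off a $(1,1,1)$ when a $1$ has two nonzero neighbours, split off $(0,0,0,0)$ when two zeros are adjacent, then deal with the rest by merging zeros) is exactly the paper's, and your continuant argument is a clean self-contained substitute for the external reference the paper invokes.

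The problem is the last subcase. You separate out the configuration $\dots,0,e,0,\dots$ and try to handle it by induction with a pull-back, and you yourself flag that the pull-back breaks when the merged entry lands at a gluing vertex. Your proposed fix ``merge the other zero instead'' does not close the gap: the inductive hypothesis only hands you \emph{some} decomposition of the shorter cycle, and there is nothing preventing both choices from putting the merged entry at a gluing vertex. So as written this case is genuinely incomplete.

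But the subcase is in fact unnecessary: your own merge-all-zeros argument already covers it. The identity $\eta(x)\eta(0)\eta(e)\eta(0)\eta(y)=\eta(x+e+y)$ shows that a length-$1$ block simply gets absorbed into a longer sum; iterating $\eta(a)\eta(0)\eta(b)=-\eta(a+b)$ over \emph{all} zeros (in any order) yields a shorter $\lambda'$-cycle each of whose entries is either a block-interior entry of $\underline{c}$ (hence $\ge 2$ under your standing hypothesis) or a sum of at least two positive block-endpoint entries (hence $\ge 2$). If the resulting cycle has length $\ge 3$ your continuant bound gives the contradiction; if it has length $2$ it must be $(0,0)$, again impossible. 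So the ``length-$1$ block'' case is vacuous and the induction/pull-back can be deleted entirely. This is precisely how the paper organises case~3: it merges all zeros at once without splitting into subcases, observes that every merged entry is $\ge 2$, and concludes that any entry $\le 1$ in the merged cycle must come from an untouched block-interior entry of $\underline{c}$, i.e.\ a $1$ with two nonzero neighbours.
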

\begin{proof}
Let $\underline{c}=(c_1,\ldots,c_m)\in \ZZ_{\ge 0}^m$, $m>2$ be a $\lambda$-\cycle.

If $c_i>0$ for all $i$ then by \cite[Cor.\ 3.3]{mCtH17} there exists a $j\in\{1,\ldots,m\}$ with $c_j=1$, without loss of generality $j=2$. But then
$\underline{c}=(1,1,1)\oplus \underline{c}'$ where $\underline{c}'=(c_3-1,c_4,\ldots,c_m,c_1-1)\in\ZZ_{\ge 0}^{m-1}$.

Otherwise there are zeros in $\underline{c}$. If $\underline{c}$ contains two adjacent zeros, say $c_2=c_3=0$ then $\underline{c}=(0,0,0,0)\oplus  \underline{c}'$ where $\underline{c}'=(c_4,\ldots,c_m,c_1)\in\ZZ_{\ge 0}^{m-2}$.

The last case is when there are zeros, but none of them has an adjacent zero. Notice first that since $\eta(a)\eta(0)\eta(b)=-\eta(a+b)$ for all $a,b$ (cf.\ \cite[Lem.\ 4.1]{mCtH17}), if $(c_1,0,c_3,\ldots,c_m)$ is a $\lambda$-\cycle, then $(c_1+c_3,\ldots,c_m)$ is a $(-\lambda)$-\cycle. Applying this transformation to all zeros simultaneously yields a $\lambda$-\cycle $\underline{c}''$ in which only the entries coming from $\underline{c}$ which were not adjacent to a zero may be $\le 1$.
But by \cite[Cor.\ 3.3]{mCtH17} there exists an entry $\le 1$ in $\underline{c}''$, so we find a $1$ in $\underline{c}$ which has nonzero adjacent entries, hence $\underline{c}^\sigma=(1,1,1)\oplus \underline{c}'$ for some $\underline{c}'\in\ZZ_{\ge 0}^{m-1}$ and $\sigma\in D_m$ as in the first case.
\end{proof}

If we allow entries in the set of all integers, the situation is slightly more complicated:
\begin{theor}[{\cite[Thm.\ 6.2]{mCtH17}}]
The set of irreducible $\lambda$-\cycles over $\ZZ$ is
\[ \{ (1,1,1), (-1,-1,-1), (a,0,-a,0), (0,a,0,-a) \mid a\in \ZZ\setminus\{\pm 1\}\}. \]
\end{theor}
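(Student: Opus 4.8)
The plan is to use Lemma~\ref{lem:irr1-1}, which over the ring $\ZZ$ says that a $\lambda$-\cycle is irreducible precisely when no entry $x_{i,j}$ of the \frieze (i.e.\ no $x_{i,j}$ with $i+2\le j\le i+m-2$) equals $1$ or $-1$, and then to pin the \frieze entries down by an elementary recursion; in particular this rules out long \cycles.

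I would first check that the listed \cycles are irreducible. A \cycle of length $3$ cannot be reducible, since by Lemma~\ref{lem:comp} a sum of two \cycles has length $k+\ell-2$ with $k,\ell>2$, hence length $\ge4$; so $(1,1,1)$ and $(-1,-1,-1)$ are irreducible. For $(a,0,-a,0)$ and $(0,a,0,-a)$ with $a\in\ZZ\setminus\{\pm1\}$ --- which are $\lambda$-\cycles, each a rotation of the other --- the \frieze has $m=4$, so its entries are exactly the $x_{i,i+2}=c_i$, namely $a,0,-a,0$; none of them is $\pm1$, so Lemma~\ref{lem:irr1-1} gives irreducibility.

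For the converse, let $\underline c=(c_1,\dots,c_m)$ be an irreducible $\lambda$-\cycle over $\ZZ$. If $m=3$, the relations $c_1c_2=c_2c_3=1$ read off from the off-diagonal entries of $\prod_{k=1}^{3}\eta(c_k)=\lambda\id$ force $\underline c\in\{(1,1,1),(-1,-1,-1)\}$ over $\ZZ$. Assume now $m\ge4$. By Lemma~\ref{lem:irr1-1} no $x_{i,j}$ with $i+2\le j\le i+m-2$ is $\pm1$; since each $c_i=x_{i,i+2}$ is such an entry, every $c_i$ is $0$ or satisfies $|c_i|\ge2$. Suppose first that $|c_i|\ge2$ for all $i$, and put $p_n:=\bigl(\prod_{k=1}^{n}\eta(c_k)\bigr)_{1,1}$, so $p_0=1$, $p_1=c_1$, and $p_n=c_np_{n-1}-p_{n-2}$ for $n\ge2$ (the continuant recursion, obtained by multiplying on the right by $\eta(c_n)$ and eliminating the $(1,2)$-entry). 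An induction gives $|p_n|>|p_{n-1}|$ for all $n\ge1$: the base case is $|p_1|=|c_1|\ge2>1=|p_0|$, and if $|p_{n-1}|>|p_{n-2}|$ then $|p_n|\ge|c_n|\,|p_{n-1}|-|p_{n-2}|\ge2|p_{n-1}|-|p_{n-2}|>|p_{n-1}|$. Hence $|p_{m-1}|\ge2$. But the \cycle relation gives $\prod_{k=1}^{m-1}\eta(c_k)=\lambda\,\eta(c_m)^{-1}$, whose $(1,1)$-entry is $0$, i.e.\ $p_{m-1}=0$ --- a contradiction. Therefore some $c_i=0$; rotating (which preserves being an irreducible $\lambda$-\cycle) we may take $c_2=0$. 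If in addition $m\ge5$, then $(i,j)=(1,4)$ satisfies $i+2\le j\le i+m-2$, so $x_{1,4}=\bigl(\eta(c_1)\eta(0)\bigr)_{1,1}=-1$ is an entry of the \frieze, contradicting irreducibility; hence $m=4$. Finally, for $m=4$ and $c_2=0$ the identity $\eta(c_1)\eta(0)\eta(c_3)=-\eta(c_1+c_3)$ turns the \cycle relation into $\eta(c_1+c_3)\eta(c_4)=-\lambda\id$; as $(0,0)$ is the only $\lambda$-\cycle of length $2$ (for either sign), $c_3=-c_1$ and $c_4=0$, i.e.\ $\underline c=(c_1,0,-c_1,0)$, and irreducibility forces $c_1\ne\pm1$. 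Undoing the rotation, $\underline c$ equals $(a,0,-a,0)$ or $(0,a,0,-a)$ with $a=\pm c_1\in\ZZ\setminus\{\pm1\}$, as claimed.

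The one substantial point is the growth estimate $|p_n|>|p_{n-1}|$, which is what eliminates every \cycle of length $\ge5$ (and, together with the trivial analysis of a zero entry, length $4$ as well); the rest is bookkeeping --- keeping track of which index pairs $(i,j)$ index genuine \frieze entries, which is exactly what separates the cases $m=4$ and $m\ge5$, and using that reducibility is invariant under the dihedral action so that the rotations above are harmless.
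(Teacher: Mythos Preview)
The paper does not give its own proof of this theorem; it is stated as a citation of \cite[Thm.~6.2]{mCtH17} and left unproved here. So there is nothing to compare against in this paper.

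Your argument is correct and self-contained. The two ingredients are exactly the right ones: Lemma~\ref{lem:irr1-1} to translate irreducibility into ``no frieze entry equals $\pm1$'', and the continuant growth $|p_n|>|p_{n-1}|$ when all $|c_i|\ge2$ to force a zero in the cycle, after which the identity $\eta(a)\eta(0)\eta(b)=-\eta(a+b)$ collapses everything. One small point worth making explicit: when you reduce the $m=4$ case to $\eta(c_1+c_3)\eta(c_4)=-\lambda\,\id$ and invoke that $(0,0)$ is the only length-$2$ cycle, you are also implicitly concluding $-\lambda=-1$, i.e.\ $\lambda=1$; this is why no irreducible $(-1)$-cycle of length $4$ appears in the list. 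Everything else is clean.
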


\begin{propo}
Let $k\in\NN_{>0}$ and $\ii=\sqrt{-1}$. Then
\[ \underline{c} = (2\ii,-\ii+1, \underbrace{2,\ldots,2}_{2k\text{-times}}, \ii+1,
    -2\ii,\ii-1, \underbrace{-2,\ldots,-2}_{2k\text{-times}}, -\ii-1) \]
is an irreducible \cycle over $\ZZ[\ii]$.
\end{propo}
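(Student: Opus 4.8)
The plan is to verify the two assertions separately: first that $\underline{c}$ is a \cycle (i.e.\ $\prod \eta(c_k) = -\id$), and then that it is irreducible over $\ZZ[\ii]$. For the first part I would compute the product of the $\eta$-matrices in blocks. The key observation is that $\eta(2)^{2k}$ and $\eta(-2)^{2k}$ have closed forms: since $\eta(2) = \begin{pmatrix} 2 & -1 \\ 1 & 0\end{pmatrix}$ is unipotent-conjugate (its only eigenvalue is $1$, with a single Jordan block), one gets $\eta(2)^{n} = \begin{pmatrix} n+1 & -n \\ n & -n+1 \end{pmatrix}$, and similarly $\eta(-2)^n = \begin{pmatrix} -(n+1) & -n \\ n & n+1 \end{pmatrix}$ (equivalently, $\eta(-2) = -\eta(2)^{-1}$ up to the obvious sign bookkeeping, since $\eta(-c) = \begin{pmatrix}-1&0\\0&1\end{pmatrix}\eta(c)^{-1}\begin{pmatrix}-1&0\\0&1\end{pmatrix}$). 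Plugging $n = 2k$ and multiplying out the four boundary matrices $\eta(2\ii)\eta(-\ii+1)(\cdots)\eta(\ii+1)$ on the left and right of the constant block, then doing the same for the second half, should collapse to $-\id$; I expect the design of the entries $\pm 2\ii$ and $\pm\ii\pm 1$ to be exactly what makes the $k$-dependence cancel.

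For irreducibility, since $\ZZ[\ii]$ is a ring, I would invoke Lemma~\ref{lem:irr1-1}: it suffices to show that no entry $x_{i,j}$ (with $i+2 \le j \le i+m-2$) of the frieze equals $1$ or $-1$. The entries are the $(1,1)$-entries of products $\prod_{k=i}^{j-2}\eta(c_k)$ of consecutive $\eta$-matrices. I would split by how the window $[i,j-2]$ sits relative to the two runs of constant entries. For a window contained in (or overlapping) the $2,\ldots,2$ block, the $(1,1)$-entry of $\eta(2)^n$ is $n+1 \ge 2$, so it is never $\pm 1$; combined with the Gauss-integer boundary factors one should check the $(1,1)$-entry always has absolute value $\ge 2$ or is non-real, hence $\ne \pm 1$. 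The cleanest bookkeeping is probably to show every proper-subproduct $(1,1)$-entry is either non-real or real with $|{\cdot}| \ge 2$; the purely real entries will only arise from windows straddling a full $\pm 2$ block together with a matched pair of conjugate boundary factors, and there the magnitude grows with $k$.

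The main obstacle I anticipate is the irreducibility bookkeeping: there are several shapes of index window (entirely within one $2$-run, straddling one boundary, containing a full run, spanning both runs through a boundary region, etc.), and one must control the $(1,1)$-entry in each case. A helpful device is that $\eta(a)\eta(0)^{?}$-type manipulations and the identity $\eta(a+b) = -\eta(a)\eta(0)\eta(b)$ let one reduce some of the mixed boundary products; another is to track the product modulo the prime $(1+\ii)$ or modulo $2$ in $\ZZ[\ii]$, where the block $\eta(2)^{2k} \equiv \eta(0)^{2k} = \id$, collapsing the constant runs and leaving only a short product of the eight boundary matrices to analyze — if none of those short $(1,1)$-entries is $\equiv \pm 1 \pmod{(1+\ii)}$ while also ruling out the finitely many genuinely small windows by hand, irreducibility follows. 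I would also double-check the degenerate small cases $k$ near the endpoints of the runs to make sure no entry accidentally becomes $\pm 1$.
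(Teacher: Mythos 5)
You take essentially the same route as the paper: closed forms for the powers $\eta(2)^\ell$ and $\eta(-2)^\ell$, a block multiplication to verify the quiddity-cycle identity, and then irreducibility via Lemma~\ref{lem:irr1-1} by checking that no $(1,1)$-entry of a proper consecutive subproduct is $\pm 1$ (the paper does exactly this, computing $x_{1,2k+5}$ as a representative window and asserting the remaining types). Two small repairs to your sketch: the correct closed form is $\eta(-2)^n=(-1)^n\begin{pmatrix} n+1 & n\\ -n & 1-n\end{pmatrix}$ — your stated matrix and the conjugation identity you quote are both off, the right relation being $\eta(-c)=-D\,\eta(c)\,D$ with $D=\diag(1,-1)$ — and the mod-$(1+\ii)$ shortcut cannot stand alone, since for instance $(\eta(2)^{2j})_{1,1}=2j+1\equiv 1\pmod{1+\ii}$, so the magnitude/non-reality casework you already propose is what actually carries the argument, just as in the paper.
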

\begin{proof}
Notice first that
\[ \eta(2)^\ell = \begin{pmatrix} \ell+1 & -\ell \\ \ell & 1-\ell \end{pmatrix}, \quad
\eta(-2)^\ell = (-1)^\ell \begin{pmatrix} \ell+1 & \ell \\ -\ell & 1-\ell \end{pmatrix} \]
for $\ell\in\NN_{>0}$. It is then easy to check that $\underline{c}$ is a \cycle.
Further, using the same identities we can compute each type of entry in the frieze pattern.
We compute $x_{1,2k+5}$ as an example:
\[ \prod_{i=1}^{2k+3} \eta(c_i) =
\eta(2\ii)\eta(-\ii+1) \eta(2)^{2k} \eta(\ii+1)
= \begin{pmatrix} 2\ii k + \ii - 1 & -2k - 2\ii - 1 \\  2k + 1 &  2\ii k + \ii - 1 \end{pmatrix}
\]
and thus $x_{1,2k+5} = 2\ii k + \ii - 1$.
It turns out that none of them is $\pm 1$ and hence it is irreducible by Lemma \ref{lem:irr1-1}.
\end{proof}

This immediately yields:

\begin{corol}
There are infinitely many irreducible $\lambda$-\cycles over the Gaussian numbers $\ZZ[\ii]$.
\end{corol}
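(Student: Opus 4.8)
The plan is to deduce the Corollary directly from the Proposition, which exhibits an explicit infinite family. First I would observe that the Proposition produces, for each $k\in\NN_{>0}$, an irreducible \cycle
\[ \underline{c}_k = (2\ii,-\ii+1, \underbrace{2,\ldots,2}_{2k}, \ii+1, -2\ii,\ii-1, \underbrace{-2,\ldots,-2}_{2k}, -\ii-1) \]
over $\ZZ[\ii]$; since a \cycle is in particular a $\lambda$-\cycle (with $\lambda=-1$), each $\underline{c}_k$ is an irreducible $\lambda$-\cycle over $\ZZ[\ii]$. So the only thing left is to check that these are genuinely infinitely many distinct objects, i.e.\ that they are not all related by the dihedral action.

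The cleanest way to do this is by a length argument: $\underline{c}_k$ has length $4k+6$, so the lengths $10,14,18,\ldots$ are pairwise distinct as $k$ ranges over $\NN_{>0}$. Since the dihedral group $D_n$ acts on cycles of a fixed length $n$ and does not change the length, $\underline{c}_k$ and $\underline{c}_{k'}$ lie in different dihedral orbits whenever $k\neq k'$. Hence $\{D_{4k+6}\cdot\underline{c}_k\mid k\in\NN_{>0}\}$ is an infinite family of pairwise distinct orbits of irreducible $\lambda$-\cycles over $\ZZ[\ii]$, which is exactly the assertion. One should also remark that $\ZZ[\ii]$ is the ring of Gaussian integers referred to in the statement as ``the Gaussian numbers'', so the Corollary follows verbatim.

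There is essentially no obstacle here: all the work has already been done inside the Proposition (verifying that $\underline{c}_k$ is a \cycle, and computing the frieze entries to see none equals $\pm 1$, so irreducibility follows from Lemma \ref{lem:irr1-1}). The only point requiring a sentence of justification is the distinctness of the orbits, and the length invariant settles it immediately. If one wanted to be even more economical, one could simply write ``This is immediate from the Proposition, since the cycles $\underline{c}_k$ have pairwise distinct lengths $4k+6$ and the dihedral action preserves length.''
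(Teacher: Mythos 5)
Your proposal is correct and follows the paper's route exactly: the corollary is stated as an immediate consequence of the preceding Proposition, which supplies the infinite family indexed by $k$. Your added remark that the cycles have pairwise distinct lengths $4k+6$ (so they are genuinely distinct, even up to the dihedral action) is a small explicit justification the paper leaves implicit, and it is fine.
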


\section{Combinatorial model}

Let $(a_1,\ldots,a_k)$ be a $\lambda'$-\cycle and $(b_1,\ldots,b_\ell)$ be a $\lambda''$-\cycle.
If we represent these two cycles as polygons, then gluing them together yields a larger polygon representing their sum, see Figure \ref{fig1}.

\begin{figure}[ht]
\begin{center}
\def\svgwidth{0.65\textwidth}
\begingroup%
  \makeatletter%
  \providecommand\color[2][]{%
    \errmessage{(Inkscape) Color is used for the text in Inkscape, but the package 'color.sty' is not loaded}%
    \renewcommand\color[2][]{}%
  }%
  \providecommand\transparent[1]{%
    \errmessage{(Inkscape) Transparency is used (non-zero) for the text in Inkscape, but the package 'transparent.sty' is not loaded}%
    \renewcommand\transparent[1]{}%
  }%
  \providecommand\rotatebox[2]{#2}%
  \ifx\svgwidth\undefined%
    \setlength{\unitlength}{514.93406434bp}%
    \ifx\svgscale\undefined%
      \relax%
    \else%
      \setlength{\unitlength}{\unitlength * \real{\svgscale}}%
    \fi%
  \else%
    \setlength{\unitlength}{\svgwidth}%
  \fi%
  \global\let\svgwidth\undefined%
  \global\let\svgscale\undefined%
  \makeatother%
  \begin{picture}(1,0.50654146)%
    \put(0,0){\includegraphics[width=\unitlength]{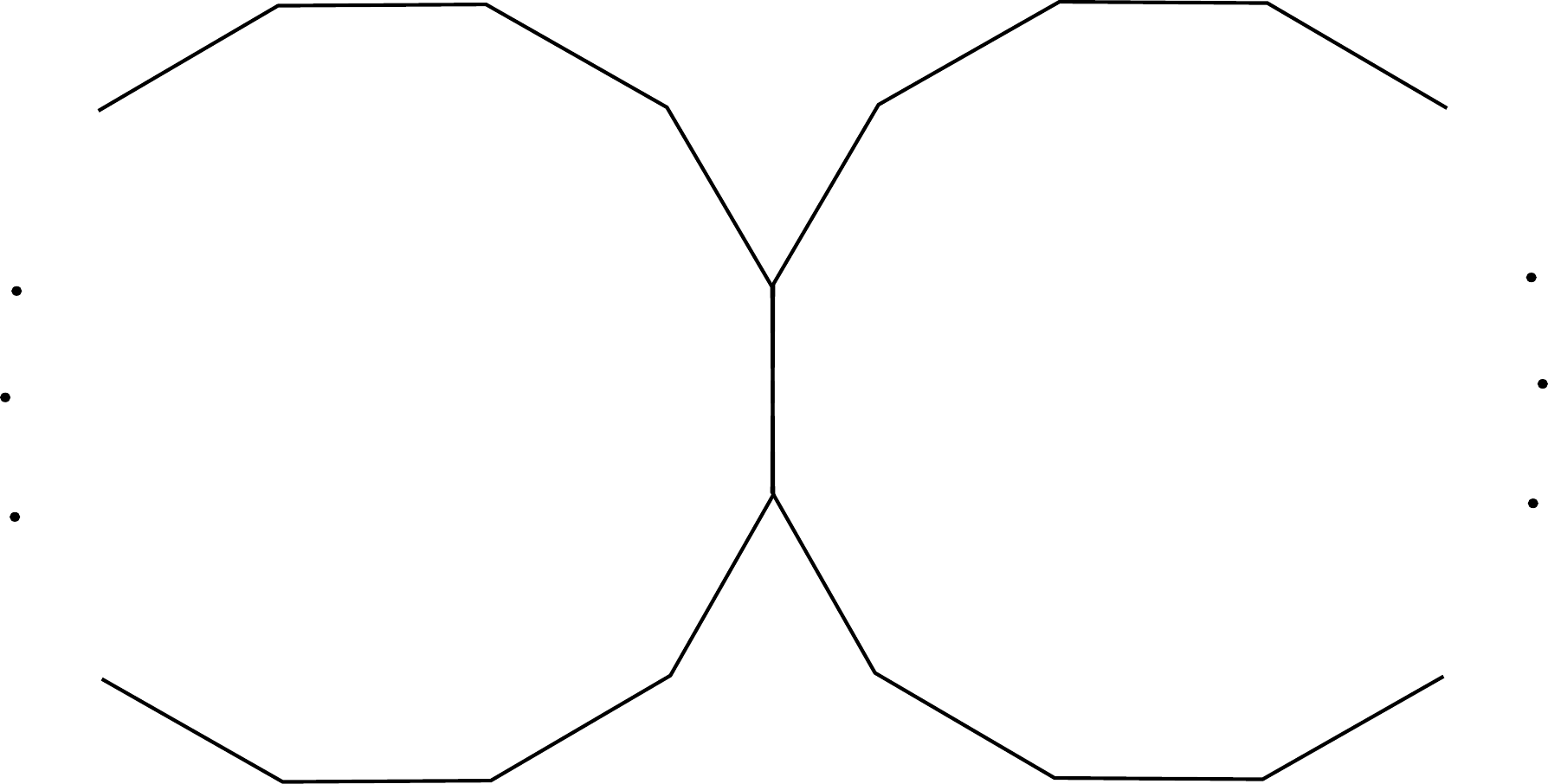}}%
    \put(0.42341778,0.31221657){\color[rgb]{0,0,0}\makebox(0,0)[lb]{\smash{$a_1$}}}%
    \put(0.36730182,0.41927309){\color[rgb]{0,0,0}\makebox(0,0)[lb]{\smash{$a_2$}}}%
    \put(0.26244896,0.47169955){\color[rgb]{0,0,0}\makebox(0,0)[lb]{\smash{$a_3$}}}%
    \put(0.42741518,0.18578825){\color[rgb]{0,0,0}\makebox(0,0)[lb]{\smash{$a_k$}}}%
    \put(0.52176254,0.31047408){\color[rgb]{0,0,0}\makebox(0,0)[lb]{\smash{$b_\ell$}}}%
    \put(0.52627237,0.18250853){\color[rgb]{0,0,0}\makebox(0,0)[lb]{\smash{$b_1$}}}%
    \put(0.32896991,0.08595751){\color[rgb]{0,0,0}\makebox(0,0)[lb]{\smash{$a_{k-1}$}}}%
    \put(0.57024289,0.07765567){\color[rgb]{0,0,0}\makebox(0,0)[lb]{\smash{$b_2$}}}%
    \put(0.68016931,0.01508224){\color[rgb]{0,0,0}\makebox(0,0)[lb]{\smash{$b_3$}}}%
    \put(0.57080662,0.40236127){\color[rgb]{0,0,0}\makebox(0,0)[lb]{\smash{$b_{\ell-1}$}}}%
  \end{picture}%
\endgroup%
\end{center}
\caption{$(a_1,\ldots,a_k) \pls (b_1,\ldots,b_\ell)$.\label{fig1}}
\end{figure}

We see the sum
$$(a_1,\ldots,a_k) \pls (b_1,\ldots,b_\ell) = (a_1+b_\ell,a_2,\ldots,a_{k-1},a_k+b_1,b_2,\ldots,b_{\ell-1})$$
in the new polygon when adding the entries at the vertices which are glued together.

Hence the decomposition of a $\lambda$-\cycle into a sum of irreducible ones translates in a natural way into a polygon decomposed into building blocks which correspond to some irreducible summands.

Since the only irreducible $\lambda$-\cycles for $R=\NN_{\ge 0}$ are $(0,0,0,0)$ and $(1,1,1)$,
in this special case we recover the Catalan combinatorics originally proposed by Conway and Coxeter.

More precisely:
It is easy to prove that if the frieze pattern of a $\lambda$-\cycle $\underline{c}$ for $R=\NN_{>0}$ has only positive entries, then $\underline{c}$ is a sum of \cycles $(1,1,1)$.
The $(0,0,0,0)$-polygons are the parts that glue classical Conway-Coxeter friezes together; they produce zeros within the corresponding frieze pattern.

We close this note with a somewhat vague task:

\begin{oppro}
Classify irreducible \cycles for some of the most interesting sets $R\subseteq \CC$.
\end{oppro}

\bibliographystyle{amsalpha}

\def\cprime{$'$}
\providecommand{\bysame}{\leavevmode\hbox to3em{\hrulefill}\thinspace}
\providecommand{\MR}{\relax\ifhmode\unskip\space\fi MR }
\providecommand{\MRhref}[2]{%
  \href{http://www.ams.org/mathscinet-getitem?mr=#1}{#2}
}
\providecommand{\href}[2]{#2}

\end{document}